\documentclass{article}
\usepackage{amsmath}
\usepackage{amsthm}
\usepackage{amssymb}
\usepackage{dsfont}
\usepackage{calc}
\usepackage{eufrak} 

\usepackage{authblk}
\usepackage{theoremref}
\usepackage{verbatim}

\begin{document}
\bibliographystyle{alpha}
\newtheorem{theorem}{Theorem}
\newtheorem{lemma}{Lemma}
\newtheorem{definition}{Definition}
\newtheorem{proposition}{Proposition}
\newtheorem{remark}{Remark}
\newtheorem{property}{Property}
\newtheorem{corollary}{Corollary}
\newcounter{casenum}
\newenvironment{caseof}{\setcounter{casenum}{1}}{\vskip.5\baselineskip}
\newcommand{\case}[2]{\vskip.5\baselineskip\par\noindent {\bfseries Case \arabic{casenum}:} #1\\#2\addtocounter{casenum}{1}}

\title{Group Embeddings with Algorithmic Properties}
\author{Arman Darbinyan}
\date{}
\affil{\emph{}}
\maketitle

\let\thefootnote\relax\footnote{{\bf ~\\Keywords}. embedding of groups, wreath product, word problem, membership problem, time complexity, space complexity  \\[2pt]
{\bf  2010 Math.\ Subject Classification}. 20F05, 20F10
 \par

}

\section{Abstract}
We show that every countable group $H$ with solvable word problem can be subnormally embedded into a 2-generated group $G$ 
which also has solvable word problem. Moreover, the membership problem for $H<G$ is also solvable.  We also give estimates of time and space complexity of the word problem in $G$ and of the membership problem for $H<G$.
\section{Introduction}

In the famous paper \cite{HNN} by Higman, B.H.Neumann and H. Neumann in 1949,  using constructions based on HNN-extensions, it was shown that every countable group can be embedded in a group generated by two elements. Later, B.H. Neumann and H. Neumann suggested an alternative embedding construction based on wreath products \cite{NN}, which allowed them to show that every countable solvable group can be embedded in a 2-generated solvable group. Further development of these ideas was done by Hall \cite{hall}. Subsequently, other constructions based on this ideas were introduced, 
where the 2-generated group inherited some other properties of the initial group. For example, embeddings of periodic groups, Phillips \cite{phillips}; property of residually finiteness, Wilson \cite{wilson}; SD-groups, subnormal and verbal embeddings, orderable groups, Mikaelian \cite{mikaelian sd} \cite{mikaelian subnormal} \cite{mikaelian verbalian} \cite{mikaelian order}; embedding which preserves elementary amenability, Olshanskii, Osin \cite{Ol'sh Osin}, etc.
Most of the aforementioned embedding constructions were motivated either by the desire to better control the algebraic structure or geometric properties of the embedding, but none of the constructions based on wreath products was concerned about algorithmic properties of the resulting group.\\

Algorithmic properties for embeddings of countable groups were investigated by Clapham \cite{Clapham}, Collins \cite{Collins}, Thompson \cite{Thompson}, Boone, Higman \cite{Boone}, Birget, Olshanskii, Rips, Sapir \cite{Birget}, Miller III \cite{miller}, Olshanskii, Sapir \cite{Ol'sh Sapir} and others. All the embeddings in these papers use constructions based on HNN-extensions, which have a disadvantage of leading to "large" finitely generated groups even if the initial group is relatively "small" countable group. For example, the resulting finitely generated group always contains nonabelian free subgroups.\\
~\\
In this paper we will investigate algorithmically well behaving embeddings based on wreath products and using ideas which can be traced back to B.H. Neumann, H.Neumann \cite{NN} and Hall \cite{hall}. \\
~\\

For a given alphabet (i.e., set of letters) $X$, we denote by $X^*$ the set of words in the alphabet $X \cup X^{-1}$. And for a given word $u\in X^*$, we denote its length with respect to this alphabet by $\left\|u\right\|_X$ or, if there will not occur ambiguity, we will simply denote it by $\left\|u\right\|$.\\
~\\
Let $S=\{ a^{(1)}, a^{(2)}, \ldots \}$ be a generating set of a countably generated group $H$. Since Turing machines work only with finite input alphabets, for considering word problem in $H$ we will encode the elements of $S$ by a quaternary code in the following way: encode $a^{(i)}$ as $3b_i$ and $(a^{(i)})^{-1}$ as $4b_i$, where $b_i$ is the binary presentation of the index $i$. Clearly, the code of $(a^{(i)})^{\pm 1}$ is not longer than $2+\left\lfloor log_2(i) \right\rfloor$ (we define $log_2(n)=0$, for $n \leq 0$). Hence, for the alphabet $S_n=\{ a^{(1)}, a^{(2)}, \ldots, a^{(n)} \}$, if $v \in S_n^{*}$, then the length of the quaternary code of $v$, which we will denote by $\mathfrak{L}(v)$, is less than or equal to 
\begin{equation}
\label{eq 0000}
\left\| v \right\|_{S_n} (2+\left\lfloor log_2(n) \right\rfloor)=
\left\| v \right\|_{S} (2+\left\lfloor log_2(n) \right\rfloor).
\end{equation}
For example, the code of $a^{(1)}$ is $31$, $\mathfrak{L}(a^{(1)}) = 2+\left\lfloor log_2(1) \right\rfloor=2$, the code of
$(a^{(1)})^{-1}$ is $41$, $\mathfrak{L}((a^{(1)})^{-1}) = 2+\left\lfloor log_2(1) \right\rfloor=2$.\\

 We will assume that the word problem for the group $H$ is solvable, in the sense that there exists an algorithm, such that for every encoded word from $S^*$ it decides whether the corresponding element in $H$ is trivial or not.\\
~\\
For convenience we introduce the notation $Lg(n)=2+\left\lfloor  log_2(n) \right\rfloor$. Also, we define the commutator of $x$ and $y$ as $[x,y]=xyx^{-1}y^{-1}$. To estimate complexity functions for the below described algorithms we will use standard asymptotic notations. Namely, $ F(n)=O(f(n))$ if there exists a real number $n_0$ and a positive constant $M$, such that $n>n_0$ implies $|F(n)|\leq M|f(n)|$. 
\\ 
~\\
Our main result is the following
\begin{theorem}
\label{th 1}
Let $ H $ be a group generated by a countable set $S=\{ a^{(1)}, a^{(2)}, \ldots\}$ and having solvable word problem with respect to the above described encoding of $S$.
Then there is a subnormal embedding $\phi$ of the group $H$ into a two generator group $G=gp \langle c, s \rangle$ with solvable word problem, and it can be recognized whether a word from $\{ c, s \}^*$ represents an element of $H$ or not, i.e., the membership problem for $H < G$ is solvable as well.
Moreover, given a word $w$ from $\{ c, s\}^*$, if $f, g : \mathbb{N} \rightarrow \mathbb{N}$ are time complexity and space complexity functions for the word problem in $H$ respectively, where we consider the word problem with respect to the quaternary encoding described above, then\\
$(i)$ the word problem in $G$ can be solved in time
\[O(n^3 Lg(n) + n^2 f( n Lg(Lg(n)))), \]
where $n$ is the length of the word $w$ from $\{c, s\}^*$.
\\ 

$(ii)$ the membership problem for $H < G$ can be solved in time 
\[O( n^4 Lg(n) + n^3 f( nLg(Lg(n)))). \]\\
$(iii)$ the space complexity for the word (resp. membership) problem in $G$ (resp. for $H<G$) is 
$O(g(n)Lg(Lg(n)))$.

\end{theorem}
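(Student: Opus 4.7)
The plan is to follow the wreath--product approach of B.H.\ and H.\ Neumann and of P.\ Hall, so that $H$ appears as a normal subgroup in the base of a wreath product and the two generators $c,s$ of $G$ play the roles of an ``address'' generator and a ``data'' generator. I will work inside (a subgroup of) the unrestricted wreath product $\widehat{W}=\widehat{B}\rtimes\langle s\rangle$ of $H$ with an infinite cyclic group $\langle s\rangle$, where $s$ is the shift and $\widehat{B}=\prod_{i\in\mathbb{Z}}H_i$ is the full direct product. The element $c\in\widehat{B}$ is chosen so that its nontrivial coordinates carry the generators $a^{(i)}$, placed at sparse positions that grow exponentially with $i$. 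Setting $G=\langle c,s\rangle$ and identifying $H$ with the copy $H_0$ at position $0$ gives a subnormal chain $H_0\triangleleft(G\cap\widehat{B})\triangleleft G$, since $\widehat{B}$ is a direct product (so $H_0$ is normal there) and the base of any wreath product is normal.

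The containment $H_0\subseteq G$ will be proved by exhibiting, for each generator $a^{(i)}$, an explicit word $u_i\in\{c,s\}^{*}$ representing $a^{(i)}\in H_0$; $u_i$ is built from shifted copies $s^{-k}cs^{k}$ of $c$ combined so that all coordinates of the resulting base element except the one at position $0$ cancel. The sparse placement of generators in $c$ forces $\|u_i\|=O(2^{i})$, which is precisely what drives the $Lg(Lg(n))$ factor in the complexity bounds: a word $w\in\{c,s\}^{*}$ of length $n$ can only involve generators $a^{(i)}$ with $i=O(Lg(n))$, and such a generator has quaternary encoding of length $O(Lg(Lg(n)))$.

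The algorithmic step is a normal--form computation. Given $w\in\{c,s\}^{*}$ of length $n$, I scan $w$ once from left to right, track the current $s$--shift, and produce a finite symbolic description of $w=bs^{k}$ in $\widehat{W}$, recording the positions $j$ at which $b(j)$ could a priori be nontrivial together with the $S^{*}$--word representing $b(j)$. An analysis of the partial shifts of $w$ shows that the number of these ``active'' positions is $O(n^{2})$ and that each associated coordinate word has length $O(n)$ in the alphabet $S$, hence quaternary encoding of length $O(n\,Lg(Lg(n)))$. The word problem for $G$ then reduces to $k=0$ together with triviality of each of these coordinates in $H$; invoking the $H$--oracle on each of the $O(n^{2})$ coordinates yields the $n^{2}f(n\,Lg(Lg(n)))$ summand, while the symbolic bookkeeping contributes $O(n^{3}Lg(n))$. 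The membership problem for $H<G$ adds the check that every active coordinate away from position $0$ is trivial in $H$, which under closer analysis contributes an additional factor of $n$ to both the bookkeeping and the oracle terms. The space bound $O(g(n)Lg(Lg(n)))$ follows by running the coordinate triviality tests sequentially and reusing the space used by the $H$--oracle on each coordinate in turn.

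The main obstacle is pinning down the construction precisely enough that (a) every $a^{(i)}$ is represented by a word $u_i$ of length $O(2^{i})$ in $\{c,s\}^{*}$, which is what forces the sparse exponential placement of generators inside $c$, (b) the ``active positions'' of an input word $w$ form a computable finite set despite $c$ having infinite support, and (c) the complexity accounting matches the stated bounds. Once the construction is fixed, the remaining steps -- verifying the subnormal chain, running the normal--form algorithm, and delegating coordinate triviality tests to the $H$--oracle -- are routine if done carefully.
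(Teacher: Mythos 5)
There is a genuine gap at the very first step: with a \emph{single} wreath product and an element $c$ whose nontrivial coordinates are single generators $a^{(i)}$ placed at sparse positions, the containment $H_0\le\langle c,s\rangle$ you need simply fails, so the words $u_i$ do not exist. Any element of $\langle c,s\rangle$ lying in the base group can be written as $\prod_k s^{-\gamma_k}c^{\beta_k}s^{\gamma_k}$, whose coordinate at position $j$ is $\prod_k c(j+\gamma_k)^{\beta_k}$. Pass to the abelianization of $H$ and set $E(\delta)=\sum_{k:\gamma_k=\delta}\beta_k$; if $c(p_i)=a^{(i)}$ and $c$ is trivial elsewhere, the image of the $j$-th coordinate is $\sum_i E(p_i-j)\,\bar a^{(i)}$. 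Requiring the coordinate at $0$ to be $a^{(i_0)}$ forces $E(p_{i_0})=1$, and then for any $i_1\ne i_0$ the coordinate at $j=p_{i_1}-p_{i_0}\ne 0$ has $\bar a^{(i_1)}$-component $E(p_{i_0})=1\ne 0$. So already for $H$ free abelian of infinite rank, every element of $\langle c,s\rangle$ supported at a single position must take a value in $[H,H]$ there, and you can never isolate a generator by cancellation of the other coordinates. This is exactly the obstruction the paper's construction is designed to beat.

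The paper beats it with an \emph{iterated} wreath product. First, inside $H\,Wr\,Z$ it takes $b^{(i)}$ equal to the constant $a^{(i)}$ on the whole half-line $\{z^k: k>0\}$ (infinite, non-sparse support is essential here): the commutator $[z,b^{(i)}]$ with the shift $z$ is then the delta function at $1$ with value $a^{(i)}$ itself, not a commutator of generators of $H$. This gives a countably generated group $K=gp\langle z, b^{(i)}\mid i\in\mathbb{N}\rangle$ containing a copy of $H$. Only at the second level, in $K\,Wr\,S$, does your sparse element appear: $c(s)=z$, $c(s^{2^i})=b^{(i)}$, and $a^{(i)}\mapsto [c,c^{s^{2^i-1}}]$; here the single-point value is allowed to be a commutator because the commutator $[z,b^{(i)}]$ is precisely the element one wants. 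Your complexity accounting (roughly $O(n^2)$ oracle calls on words of encoded length $O(n\,Lg(Lg(n)))$, with sequential reuse of space for the space bound) matches the paper's in spirit, but it is attached to a normal form in a group that does not contain $H$; to repair the argument you must redo it for the two-level normal form, where the $O(n^2)$ factor arises as ($O(n)$ outer coordinates $s^{\mu}$, $|\mu|\le 3\gamma_0$) times ($O(n)$ inner coordinates $z^{\nu}$, $|\nu|\le\eta_0$), and the subnormal chain has one more term than the one you wrote.
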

~\\
Roughly speaking, the space function of an algorithm is a measurement which shows the maximal amount of memory (or space) used by a (Turing) machine at some point of time, while running the algorithm and time complexity is the time required by the machine to run the algorithm, and the domain of these functions shows the length of the input data. For more rigorous definitions of time and space complexity functions we refer to the articles \cite{olshanskii space functions} \cite{sapir birget rips}.\\
~\\
Directly from the formulas in $(i)$ and in $(ii)$ we obtain the following
\begin{corollary}
\label{corollary 0}
If the word problem for $H$ is solvable in polynomial time, then the word (resp. membership) problem for $G$ (resp. for $H<G$) is solvable in polynomial time as well.
\end{corollary}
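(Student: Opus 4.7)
The plan is to observe that the bounds in parts $(i)$ and $(ii)$ of Theorem~\ref{th 1} are composed of three kinds of factors: polynomial factors in $n$ (namely $n^2$, $n^3$, $n^4$), the slow-growing factors $Lg(n) = 2+\lfloor \log_2 n\rfloor$ and $Lg(Lg(n))$, and a single evaluation of $f$ at the argument $n\,Lg(Lg(n))$. I would first note that $Lg$ is dominated by $\log_2(\cdot)+O(1)$, hence $Lg(n) = O(\log n)$ and $Lg(Lg(n)) = O(\log\log n)$, both of which are in particular bounded by a polynomial in $n$.

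Next, I would use the hypothesis that $f(n) = O(n^k)$ for some constant $k \ge 1$. Substituting the argument $n\,Lg(Lg(n))$, one gets
\[
f\bigl(n\,Lg(Lg(n))\bigr) \;=\; O\bigl((n\,Lg(Lg(n)))^k\bigr) \;=\; O\bigl(n^k (\log\log n)^k\bigr),
\]
which is bounded by a polynomial in $n$ (since $(\log\log n)^k = O(n)$, for example). Multiplying this polynomial bound by the prefactor $n^2$ (for the word problem) or $n^3$ (for the membership problem) preserves the polynomial nature. Likewise, the leading terms $n^3 Lg(n)$ and $n^4 Lg(n)$ are bounded above by $n^4$ and $n^5$ respectively (up to constants), which are also polynomial.

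Combining these observations, both $O(n^3 Lg(n) + n^2 f(nLg(Lg(n))))$ and $O(n^4 Lg(n) + n^3 f(nLg(Lg(n))))$ are bounded by polynomials in $n$, which establishes the corollary. There is no real obstacle here; the content of the corollary is just a routine asymptotic simplification once Theorem~\ref{th 1} is in hand, the only point worth making explicit being that composing the polynomial $f$ with the quasi-linear argument $n\,Lg(Lg(n))$ does not break polynomiality because $Lg(Lg(n))$ grows only doubly-logarithmically.
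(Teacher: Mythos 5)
Your proposal is correct and takes essentially the same route as the paper: the paper obtains Corollary~\ref{corollary 0} directly from the complexity bounds in parts $(i)$ and $(ii)$ of Theorem~\ref{th 1} without further argument, and your write-up simply makes explicit the routine verification that $Lg(n)=O(\log n)$, $Lg(Lg(n))=O(\log\log n)$, and the composition $f(n\,Lg(Lg(n)))$ with a polynomial $f$ remain polynomially bounded.
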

In other words, Corollary \ref{corollary 0} means that, if the word problem in $H$ belong to the class $P$, then the word problem in $G$ belongs to the class $P$ too. In the proof of Theorem \ref{th 1} it is shown that if the word problem in $H$ (resp. the membership problem for $H<G$) belongs to the class $NP$, then the word problem in $G$ (resp. the membership problem for $H<G$) belongs to the class $NP$ too. Also, from the formula in $(iii)$ it follows that if the word problem in $H$ belongs to the space complexity class $L$ or $NL$, then the word problem in $G$ and the membership problem for $H<G$ belong to the same space complexity class.\\
~\\
In the statement of Theorem \ref{th 1},  we said that $\phi$ is a subnormal embedding, meaning that there exists a finite chain $G_0, G_1 \ldots, G_k$ of subgroups of $G$, such that $\phi(H)=G_0 \triangleleft G_1 \triangleleft \ldots \triangleleft G_k=G$.\\
~\\
From the embedding construction given in the proof of Theorem 1, we also obtain
\begin{corollary}
\label{corollary 1}
If $H$ is a solvable group of length $l$, then $G$ is a solvable group of length $l+2$.
\end{corollary}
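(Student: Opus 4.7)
The plan is to read the derived length bound directly off the subnormal chain produced by the embedding in Theorem \ref{th 1}, using two standard facts: (a) for any normal subgroup $N \triangleleft K$ of a solvable group, $dl(K) \leq dl(N) + dl(K/N)$; and (b) for solvable groups $A, B$, the standard wreath product satisfies $dl(A \wr B) \leq dl(A) + dl(B)$, since its base group is a direct power of $A$ (derived length $dl(A)$) and its quotient by the base is $B$.

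First, I would trace through the Neumann--Neumann--Hall style wreath product construction used to prove Theorem \ref{th 1} and identify the intermediate groups $G_1, \ldots, G_k$ in the asserted chain $\phi(H) = G_0 \triangleleft G_1 \triangleleft \ldots \triangleleft G_k = G$. Because $G$ is 2-generated and built from a single wreath-product-like construction, I expect $k = 2$: the subgroup $G_1$ should be (essentially) the base group $\phi(H)^C$, a restricted direct power of $H$ indexed by a cyclic group $C$, and $G$ is obtained from $G_1$ by adjoining generators whose action produces a metabelian quotient $G/\phi(H)^C$.

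Next, I would compute derived lengths along this chain. The restricted direct power $\phi(H)^C$ has derived length exactly $l$, since taking direct products does not increase derived length. The quotient $G / \phi(H)^C$ is a 2-generated subgroup of $\mathbb{Z} \wr \mathbb{Z}$ (or a similar 2-generated metabelian group underlying the construction), hence has derived length at most $2$. Applying the additivity formula $dl(G) \leq dl(\phi(H)^C) + dl(G / \phi(H)^C)$ yields
\[
dl(G) \leq l + 2,
\]
which is the assertion of the corollary. Equivalently, one can observe that $G$ embeds into $H \wr A$ for a 2-generated metabelian group $A$, so $dl(G) \leq dl(H) + dl(A) \leq l + 2$ by fact (b).

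The main obstacle is the bookkeeping: verifying that the top quotient $G / \phi(H)^C$ really is metabelian (not merely solvable of a priori unknown length). This uses the specific role of the two generators $c$ and $s$ of $G$ --- one acting as a cyclic shift, the other indexing the coordinates --- so that their joint image in $\mathrm{Aut}(\phi(H)^C)$ factors through a 2-generated metabelian group. Once this structural observation is made, the derived length count is immediate from the additivity of $dl$ across normal series.
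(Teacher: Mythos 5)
Your argument is correct, but it is organized differently from the paper's, which is a two-line affair: the construction places $G$ inside the iterated wreath product $(H \, Wr\, Z)\, Wr\, S$ with $Z,S$ infinite cyclic, and the paper simply applies the standard bound $dl(A\, Wr\, B)\le dl(A)+dl(B)$ twice ($dl(H\,Wr\,Z)\le l+1$, then $dl((H\,Wr\,Z)\,Wr\,S)\le l+2$) and passes to the subgroup $G$. You instead peel off the whole ``acting part'' at once: take the doubly-iterated base group $N=(H^{Z})^{S}$, which is normal in $(H\,Wr\,Z)\,Wr\,S$, is a direct power of $H$ (hence of derived length $\le l$), and has quotient $((H\,Wr\,Z)\,Wr\,S)/N\cong Z\,Wr\,S\cong\mathbb{Z}\wr\mathbb{Z}$, which is metabelian; then $dl(G)\le dl(G\cap N)+dl(G/(G\cap N))\le l+2$. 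Both routes rest on the same two facts you list, and both prove only the upper bound $l+2$ (exactness is discussed separately in the paper via Hall's result on $\mathbb{Q}$). Two small corrections to your write-up: the relevant direct power of $H$ is indexed by $\mathbb{Z}\times\mathbb{Z}$, not by a single cyclic group $C$, and the normal subgroup you want is $G\cap (H^{Z})^{S}$ rather than anything of the form $\phi(H)^{C}$ (the normal closure of $\phi(H)$ in $G$ is harder to describe directly); with those adjustments the ``bookkeeping'' obstacle you flag --- that the top quotient is genuinely metabelian --- is immediate, since the quotient of a wreath product by its base group is the top group. The paper's version avoids identifying any quotient at all, which is what the iterated application of the wreath-product bound buys you.
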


A group $G$ is called locally indicable if for every finitely generated subgroup $1 \neq H \leq G$, there exists a surjective homomorphism from $H$ onto the infinite cyclic group $\mathbb{Z}$.

\begin{corollary}
\label{corollary 2}
If $H$ is torsion free or locally indicable, then so is $G$.
\end{corollary}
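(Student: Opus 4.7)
The plan is to use the explicit construction of $G$ from the proof of Theorem~\ref{th 1}, which, following the Neumann--Neumann and Hall strategy, realizes $G$ as a subgroup of an (iterated) wreath product built from $H$ and a torsion-free, locally indicable auxiliary group (in the simplest case, $\mathbb{Z}$, consistent with the ``$l+2$'' in Corollary~\ref{corollary 1}). Both properties in question are inherited by subgroups, so it suffices to establish the corresponding preservation results for restricted wreath products and then invoke them at each step of the construction.

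First I would check that if $A$ and $B$ are torsion-free, then the restricted wreath product $A \wr B = A^{(B)} \rtimes B$ is torsion-free. Indeed, if $(f,b)^n = (1,1)$ with $n > 0$, then $b^n = 1$ forces $b = 1$ (since $B$ is torsion-free), so the element lies in the base, and $(f,1)^n = (f^n,1) = (1,1)$ forces $f(x)^n = 1$ for every $x \in B$, hence $f = 1$ by torsion-freeness of $A$.

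Next I would establish the analogous statement for local indicability: if $A$ and $B$ are locally indicable, so is $A \wr B$. Given a finitely generated nontrivial $K \leq A \wr B$, consider the projection $\pi \colon A \wr B \to B$. If $\pi(K) \neq 1$, then $\pi(K)$ is a finitely generated nontrivial subgroup of $B$, which by hypothesis surjects onto $\mathbb{Z}$, and this surjection pulls back through $\pi|_K$. Otherwise $K$ lies in the direct sum $A^{(B)}$, and being finitely generated is contained in some finite direct product $A^m$; picking a coordinate on which $K$ projects nontrivially yields a finitely generated nontrivial subgroup of $A$, which surjects onto $\mathbb{Z}$ by local indicability of $A$, and composing gives the desired homomorphism.

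The main obstacle is really bookkeeping rather than novelty: one has to verify that the particular construction used to prove Theorem~\ref{th 1} does exhibit $G$ as a subgroup of an iterated wreath product $(\cdots((H \wr C_1) \wr C_2) \cdots) \wr C_k$ in which every $C_i$ is torsion-free and locally indicable (say, infinite cyclic or free abelian). Once this is read off from the construction, both conclusions of the corollary follow immediately: iterate the two preservation lemmas above, and use the fact that torsion-freeness and local indicability pass to subgroups, so that $G$ inherits whichever property $H$ possesses.
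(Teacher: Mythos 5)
Your proposal is essentially the paper's own argument: the paper proves that $A\,Wr\,B$ inherits torsion-freeness (resp.\ local indicability) from $A$ and $B$ by viewing it as the extension $1 \to A^{B} \to A\,Wr\,B \to B \to 1$, applies this twice to get that $(H\,Wr\,Z)\,Wr\,S$ has the property, and concludes for $G$ as a nontrivial subgroup; your two preservation lemmas are just this extension argument unfolded. One caveat you should fix: the wreath products in this paper are \emph{complete} (the base $A^{B}$ consists of all functions $B \to A$, and the elements $b^{(i)}$ and $c$ have infinite support), so $G$ does \emph{not} embed in the restricted iterated wreath product you work with, and in the local indicability step the claim that a finitely generated subgroup of the base is contained in a finite power $A^{m}$ fails for $A^{B}$. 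This is harmless — your torsion-freeness computation goes through verbatim for $A^{B}$, and for local indicability you only need to evaluate at a single coordinate $x \in B$ where the subgroup is nontrivial and compose with the resulting surjection onto $\mathbb{Z}$ — but as written the lemmas do not apply to the group the theorem actually constructs.
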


Before moving forward to prove the theorem, let us recall the definition of wreath products. Given two groups $A$ and $B$, the base subgroup $A^{B}$ is the set of functions from $B$ to $A$ with pointwise multiplication and the group $B$ acts on $A^B$ from the left by automorphisms, such that for $f\in A^B$ and $b \in B$, the resulting function $bf$ is given by 
$$ (b f)(x) = f(xb), \forall x \in B. $$
For the convenience we will denote $bf$ by $f^b$.

\begin{definition}
The wreath product $A Wr B$ is defined as a semidirect product $ A^B \ltimes B$, with the multiplication $(f_1b_1)(f_2 b_2) = f_1f_2^{b_1} b_1b_2$ for all $ f_1, f_2 \in A^B, ~b_1, b_2 \in B$. 
\end{definition}
Note that from the above definition it follows that $f^b=bfb^{-1}$. Also, the inverse of $fb$ is $(f^{-1})^{b^{-1}}b^{-1}$, the conjugate of 
$f_1b_1$ by $f_2b_2$ is $(f_1b_1)^{f_2b_2}=(f_2 f_1^{b_2} (f_2^{-1})^{b_2b_1b_2^{-1}})b_2b_1b_2^{-1}$.

\begin{definition}
For $g=fb\in A Wr B$ we call $f$ the passive and $b$ the active part of the group element $g$.
\end{definition}

The group $G$ is called linearly ordered (or fully ordered) if there is a linear order $<$ defined on the elements of $G$, such that
for any elements $g_1$ and $g_2$ from $G$, if $g_1 \leq g_2$, then $g_1x \leq g_2x$ and $xg_1 \leq xg_2$ for all $x \in G$.\\

In the paper \cite{mikaelian order} (see also \cite{Darbin Mikael}) the following fact was proved
\begin{lemma}
\label{lem darbinyan mikaelian}
Let A and B be linearly ordered groups and $\Gamma$ be a subgroup of $ A Wr B$. If for
each $f b \in \Gamma$, $supp(f)$ is well-ordered, then $\Gamma$  is linearly orderable.

\end{lemma}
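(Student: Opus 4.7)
The plan is to equip $\Gamma$ with a positive cone of lexicographic type, reading off signs first from the active part in $B$ and then, for elements with trivial active part, from the smallest point of the support. Concretely, I would define
\[ P = \{ fb \in \Gamma \setminus \{1\} : b > 1_B, \text{ or } b = 1_B \text{ and } f(x_0) > 1_A \}, \]
where $x_0 = \min \mathrm{supp}(f)$ in the second case. The well-ordering hypothesis is precisely what guarantees that this minimum exists, so $P$ is well-defined on all of $\Gamma$. It then suffices to verify the three defining properties of a two-sided positive cone: (i) trichotomy $\Gamma = P \sqcup \{1\} \sqcup P^{-1}$, (ii) closure $P \cdot P \subseteq P$, and (iii) conjugation invariance $gPg^{-1} \subseteq P$ for every $g \in \Gamma$. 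From these, the relation $x < y \iff yx^{-1} \in P$ is a bi-invariant linear order on $\Gamma$.

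Trichotomy is routine: for $fb \neq 1$, either $b \neq 1_B$ and the sign of $b$ in $B$ determines membership, or $b = 1_B$ and the sign of $f(x_0)$ in $A$ does, with the formula $(fb)^{-1} = (f^{-1})^{b^{-1}} b^{-1}$ showing the three classes are exclusive. For multiplicative closure I would expand $(f_1 b_1)(f_2 b_2) = f_1 f_2^{b_1} b_1 b_2$ and split on the active parts. When at least one $b_i > 1_B$, two-sided invariance of the order on $B$ forces $b_1 b_2 > 1_B$ and we are done. When both $b_i = 1_B$, the product lies in $A^B$; its support sits inside the well-ordered union $\mathrm{supp}(f_1) \cup \mathrm{supp}(f_2)$, and a short case analysis on which of the two minima realizes $\min \mathrm{supp}(f_1 f_2)$ shows, using that the product of two positive elements of $A$ is again positive, that the value of $f_1 f_2$ at that minimum is positive.

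The main step, and the one I expect to absorb most of the work, is conjugation invariance. For $g = f_1 b_1 \in \Gamma$ and $h = f_2 b_2 \in P$, applying the conjugation formula quoted after the wreath product definition gives
\[ g h g^{-1} = f_1 f_2^{b_1} (f_1^{-1})^{b_1 b_2 b_1^{-1}} \cdot b_1 b_2 b_1^{-1}, \]
whose active part $b_1 b_2 b_1^{-1}$ has the same sign as $b_2$ by two-sided invariance in $B$, settling the case $b_2 > 1_B$. When $b_2 = 1_B$, the active part vanishes and the passive part collapses to $f_1 f_2^{b_1} f_1^{-1}$, whose value at $x$ is $f_1(x) f_2(xb_1) f_1(x)^{-1}$; since conjugation in $A$ preserves nontriviality, the support is exactly $\mathrm{supp}(f_2) \cdot b_1^{-1}$. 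Right translation by $b_1^{-1}$ is an order isomorphism of $B$, so this set is still well-ordered with minimum $x_0 b_1^{-1}$ for $x_0 = \min \mathrm{supp}(f_2)$, and the value there is a conjugate in $A$ of the positive element $f_2(x_0)$, hence positive by two-sided invariance of the order on $A$. The main obstacle is disciplined bookkeeping for how supports and values transform under the wreath action; the well-ordering hypothesis is used both to ensure that the defining minima exist and to ensure that they survive the translations appearing in the conjugation computation.
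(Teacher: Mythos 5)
The paper does not prove this lemma at all --- it is quoted as a known result from \cite{mikaelian order} and \cite{Darbin Mikael} --- so there is no internal proof to compare against; your positive-cone argument is correct and is essentially the standard proof given in those sources (going back to B.~H.~Neumann's ordering of wreath products), with the well-ordering hypothesis used exactly where you say: to make $\min\mathrm{supp}(f)$ exist and to see that it transforms by an order-isomorphism under the translations arising in the product and conjugation formulas. All three cone axioms check out as you outline them, including the implicit point that two elements of $P$ with trivial active part cannot multiply to the identity because the value of the product at the smaller of the two support minima is a product of at most two positive elements of $A$ and hence nontrivial.
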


In what follows, this lemma applied to the group $G$ from Theorem 1 gives the following
\begin{corollary}
\label{corollary 3}
If $H$ is linearly orderable, then so is $G$.
\end{corollary}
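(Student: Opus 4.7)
The plan is to apply Lemma \ref{lem darbinyan mikaelian} directly to the wreath product structure underlying $G$. From the (forthcoming) construction in the proof of Theorem \ref{th 1}, $G$ embeds as a subgroup of some wreath product $H \,\mathrm{Wr}\, B$, where $B$ is a group built from the machinery used to encode the countably many generators of $H$ by just two letters $c, s$. In the Neumann--Hall tradition this $B$ is a finitely generated free-abelian or free group, which is already known to be linearly orderable. Combined with the hypothesis that $H$ is linearly orderable, the hypotheses on the base and top groups in Lemma \ref{lem darbinyan mikaelian} are met, and the task reduces to verifying that for every element $fb \in G$ the support $\mathrm{supp}(f) \subseteq B$ is well-ordered with respect to the chosen order on $B$.

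First I would write down the passive/active decomposition of the two generators $c$ and $s$ as elements of $H \,\mathrm{Wr}\, B$. In embeddings of this flavour, the generator that carries the information about $H$ has a passive part supported at a single point of $B$, while the other generator lies (up to a trivial passive part) in the active factor $B$. In particular, the passive parts of $c^{\pm 1}$ and $s^{\pm 1}$ all have finite support.

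Next, using the product rule $(f_1 b_1)(f_2 b_2) = f_1 f_2^{b_1} b_1 b_2$, the passive part of any finite product of the generators and their inverses is a finite product of $B$-translates of finitely supported functions. Hence for each word $w \in \{c, s\}^*$, if the element it represents is written as $fb \in H \,\mathrm{Wr}\, B$, then $\mathrm{supp}(f)$ is a finite subset of $B$. Any finite subset of a linearly ordered set is automatically well-ordered, so the support condition of Lemma \ref{lem darbinyan mikaelian} holds throughout $G$, and the lemma yields a linear order on $G$.

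The only real obstacle is bookkeeping rather than conceptual: one must confirm that the explicit $B$ produced by the construction of Theorem \ref{th 1} is indeed linearly orderable (which is immediate for free or free-abelian groups of finite rank), and that the passive parts of $c$ and $s$ are, as expected, finitely supported under the encoding used. Once these two facts are read off from the construction, Corollary \ref{corollary 3} is an immediate application of Lemma \ref{lem darbinyan mikaelian}.
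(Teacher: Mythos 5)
Your argument has a genuine gap: the claim that the passive parts of the generators are finitely supported is false for the construction actually used in the proof of Theorem \ref{th 1}, and it is not a matter of bookkeeping. The group $G$ sits inside the iterated wreath product $(H \,Wr\, Z)\,Wr\,S$, not inside a single wreath product $H\,Wr\,B$, and the generators deliberately have \emph{infinite} support: $b^{(i)}\in H^Z$ takes the value $a^{(i)}$ at \emph{every} positive power of $z$, and $c\in K^S$ takes the value $b^{(i)}$ at $s^{2^i}$ for \emph{every} $i>0$. This infinitude is essential --- it is precisely what makes the commutators $[z,b^{(i)}]$ and $[c,c^{s^{2^i-1}}]$ have singleton support and hence generate a copy of $H$ --- and it is the reason Lemma \ref{lem darbinyan mikaelian} is stated for well-ordered supports rather than finite ones. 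If the supports were finite, the lemma would be an unremarkable statement about restricted wreath products and the corollary would be routine; your proof, as written, proves only that easier statement. Consequently the step ``any finite subset of a linearly ordered set is well-ordered'' does not apply, and the support condition of the lemma is not verified by your argument.

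The correct route, which the paper takes, is close to yours in spirit but uses the right finiteness substitute: the supports of $b^{(i)}$ and of $c$ are subsets of $\mathbb{Z}$ bounded from below, hence well-ordered, and this property is inherited by the passive parts of all elements of $K=gp\langle z, b^{(i)} \mid i\in\mathbb{N}\rangle$ and of $G=gp\langle c,s\rangle$ (products and translates of finitely many functions whose supports are bounded below again have supports bounded below). One then applies Lemma \ref{lem darbinyan mikaelian} twice: first to $K\le H\,Wr\,Z$ (with $H$ linearly orderable by hypothesis and $Z$ infinite cyclic) to conclude $K$ is linearly orderable, and then to $G\le K\,Wr\,S$. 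To repair your proof you would need to replace the finite-support claim with this bounded-below observation and restructure the application of the lemma around the two-step wreath product.
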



\section{Proof of Theorem \ref{th 1}}
In the proof, first we will describe the construction of the group $G$ and the embedding $\phi: H \rightarrow G$, then we will
describe the algorithms for the word problem and the membership problem, simultaneously estimating the time complexity of these algorithms.
Finally, we will turn to the proof of the part (iii).\\

Let us consider the group $H Wr Z$, where $Z=\langle z \rangle$ is an infinite cyclic group.\\
Take $b^{(i)} \in H^Z$, such that 
\[ b^{(i)}(z^k) = \left\{
                      \begin{array}{ll}
                       a^{(i)} & \mbox{if $k > 0$ ,}\\
                        1& \mbox{otherwise.}
                     \end{array}
                    \right. , i= 1,2, \ldots
\]
Note that, 
\[
(zb^{(i)}z^{-1})(z^k) = \left\{
                      \begin{array}{ll}
                       a^{(i)} & \mbox{if $k \geq 0$ ,}\\
                        1& \mbox{otherwise.}
                     \end{array}
                    \right.
\]
(this is because $zb^{(i)}z^{-1} (z^{k}) = (zb^{(i)})(z^{k}) = b^{(i)}(z^{k+1}))$.\\
Therefore, 
\[
[z, b^{(i)}](1) = (zb^{(i)}z^{-1})(1) (b^{(i)^{-1}})(1)=a^{(i)}
\]
and $[z, b^{(i)}](z^k)=1$ for all $k \neq 0,$ 
i.e., the functions $[z, b^{(i)}]$
generate a subgroup isomorphic to H. So the mapping $a^{(i)} \mapsto [z, b^{(i)}]$
embeds the group $H$ into the subgroup $K =gp\langle z, b^{(i)} | i \in \mathbb{N} \rangle$ of the wreath product.
Now let us consider the group $ K Wr S$, where $ S = \langle s \rangle $ is an infinite cyclic group.\\
Consider $c \in K^{S}$, such that
 \[           c(s^k) = \left\{ 
                       \begin{array}{lll}
                       z & \mbox{if $k = 1$ ,}\\
                       b^{(i)} & \mbox{if $i > 0$ and $ k = 2^i$, }\\
											1& \mbox{otherwise.}
                     \end{array}
                    \right.
\]

For our construction it is vital that the support of $c$ is sparse. One of the reasons can be seen in the next described key property.\\
 
The element $[c,c^{s^{2^i-1}}]$, considered as a $ S \rightarrow	K$ function, takes a non-trivial value $a^{(i)}$ (more precisely, $[z,b^{(i)}]$) only at $s$. Indeed, 
\[
[c,c^{s^{2^i-1}}](s) = c(s) (c^{s^{2^i-1}})(s) c^{-1}(s) c^{-s^{2^i-1}}(s)
=c(s) c(s^{2^i}) (c(s))^{-1} (c(s^{2^i}))^{-1}
\]
\[
=z b^{(i)} z^{-1} b^{(i)^{-1}}=[z, b^{(i)}],
\]
and in general, if $[c,c^{s^{2^i-1}}](s^k) \neq 1$, then $c(s^k) \neq 1$, $c^{s^{2^i-1}}(s^k)=c(s^{2^i+k-1}) \neq 1$. But this means
that $k$ and $2^i+k-1$ are some powers of $2$, which can take place only if $k=1$.

This property of $[c, c^{2^i-1}]$ implies that the map $\phi: H \rightarrow G=gp\left\langle  c, s \right\rangle$, given by $\phi: a^{(i)} \mapsto [c, c^{2^i-1}]$, embeds $H$ into the 2-generated group $G$.

Now let us describe a procedure which recognizes identities in $G$ among the words of $\{ c, s \}^*$.\\
Let 
\begin{equation}
\label{eq 1}
 w = s^{\alpha_0}c^{\beta_1} \ldots s^{\alpha_{n-1} }c^{\beta_n} s^{\alpha_n}
\end{equation}
 be an arbitrary word in $\{c, s\}^*$, where $\alpha_0, \alpha_1, \ldots, \alpha_n, \beta_1, \ldots, \beta_n $ are some non-zero integers with a possible exception of $\alpha_0$ and $\alpha_n$ (they could be equal to $0$). Particularly, $\left\|w \right\| _ {\{ c, s\}} \geq 2n-1$.

Note that $w$ is equal to
\begin{equation}
\label{eq 2}
(c^{s^{\gamma_1}})^{\beta_1} (c^{s^{\gamma_2}})^{\beta_2} \ldots  (c^{s^{\gamma_n}})^{\beta_n}s^{\gamma}  
\end{equation}
 in $G$, where 
\begin{equation}
\label{eq 0}
\gamma=\sum_{i=0}^{n}{\alpha_i}, ~\gamma_j = \sum_{i=0}^{j-1}{\alpha_i},
\end{equation}
thanks to the identity
$s^{\alpha} c^{\beta} =  (c^{s^{\alpha}})^{\beta}s^{\alpha}$. Also, note that 
\[
\left\| w \right\|_{\{c,s\}}=\sum_{i=0}^{n}{|\alpha_i|}+\sum_{i=1}^{n}{|\beta_i|} \geq |\gamma|+\sum_{i=1}^{n}{|\beta_i|}.
\]

A direct consequence of these formulas is that for passing from the form (\ref{eq 1}) to the form (\ref{eq 2}) we only need to find $\gamma$-s using the above described formula (\ref{eq 0}). Since the binary lengths of $\gamma$-s are not larger than $log_2(\left\| w\right\| )$, we get that $\sum_{i=0}^{j-1}{\alpha_i}$ can be computed in  $O(j \cdot log_2(\left\| w\right\|))$ time. This means that the whole computation in (\ref{eq 0}) can be done in $O(log_2(\left\| w\right\|)n^2)$ time. Also, we have 
\begin{equation}
\label{eq 00}
O(log_2(\left\| w\right\|)n^2) \leq O(log_2(\left\| w\right\|)\left\| w \right\|^2).
\end{equation}
~\\
For the presentation (\ref{eq 2}) of $w$, let us define the set $B_i = \{ j \in \{ 1, \ldots, n\} ~|~ \gamma_j = \gamma_i \}$ for $ i = 1, \ldots, n$. 

\begin{lemma}

\label{lem 1}
If $(c^{s^{\gamma_1}})^{\beta_1}(c^{s^{\gamma_2}})^{\beta_2} \ldots (c^{s^{\gamma_n}})^{\beta_n}s^{\gamma} = 1$, then $ \sum_{j \in B_i}{\beta_j}=0$, for $i=1,\dots, n $ and $\gamma = 0$. 
\end{lemma}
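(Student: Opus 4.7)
The plan is to separate the active and passive parts of the element in the wreath product $K Wr S = K^S \rtimes S$, and then project the passive part through a homomorphism that kills the base of the inner wreath product $H Wr Z$, thereby reducing the identity to one inside the infinite cyclic group $Z$.

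To begin, each factor $(c^{s^{\gamma_i}})^{\beta_i}$ in the presentation (\ref{eq 2}) lies in the base group $K^S$, so the whole product reads $F \cdot s^{\gamma}$ with $F = \prod_{i=1}^{n}(c^{s^{\gamma_i}})^{\beta_i}\in K^S$. Since $K Wr S$ is a semidirect product, the equation $F \cdot s^{\gamma} = 1$ forces $F = 1$ and $s^{\gamma} = 1$ separately; as $s$ has infinite order, the latter gives $\gamma = 0$ immediately. What remains is to extract the claimed relations $\sum_{j\in B_i}\beta_j = 0$ from $F = 1$.

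For this step I would exploit the embedding $K \hookrightarrow H Wr Z = H^Z \rtimes Z$ and the induced quotient homomorphism $\pi: K \to Z$ sending $z \mapsto z$ and $b^{(i)} \mapsto 1$ for every $i$. Post-composition with $\pi$ on base functions extends to a group homomorphism $\Pi : K Wr S \to Z Wr S$ (the $S$-action commutes with $\pi$ because it acts only on indices), and under $\Pi$ the generator $c$ maps to the function $c' \in Z^S$ with $c'(s) = z$ and $c'(s^k) = 1$ for $k \neq 1$. Applying $\Pi$ to $F = 1$ yields $\prod_{i=1}^n (c'^{s^{\gamma_i}})^{\beta_i} = 1$ in $Z^S$; since $Z$ is abelian, evaluation at $s^k$ gives
\[
z^{\sum_{i : \,\gamma_i = 1 - k}\beta_i} = 1.
\]
Setting $k = 1 - \gamma_i$ for each fixed $i$ makes the index set become exactly $B_i$, and the infinite order of $z$ in $Z$ then forces $\sum_{j\in B_i}\beta_j = 0$.

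The main subtlety is that $K$ is in general far from abelian or torsion-free, so attempting to read off the conditions directly inside $K^S$ would entangle the $z$-contributions of $c$ with its $b^{(i)}$-contributions and, even after using the sparse support $\{s, s^2, s^4, \ldots\}$ to disentangle them, would at best yield relations modulo the orders of the $a^{(i)}$'s in $H$. Projecting to $Z$ is precisely what cleanly isolates the ``$z$-slot'' of $c$ and leaves a single infinite-order element to carry the required linear information, turning an \emph{a priori} messy identity in $K$ into a transparent equation in $\mathbb{Z}$.
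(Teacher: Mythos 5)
Your proposal is correct and follows essentially the same route as the paper: both arguments come down to the observation that the $z$-exponent of the value of the base-group part at $s^{1-\gamma_i}$ equals $\sum_{j\in B_i}\beta_j$, which must vanish. The only cosmetic difference is that you apply the retraction onto $\langle z\rangle$ (what the paper calls reading off the \emph{active part} of the resulting element of $K < H\,Wr\,Z$) before evaluating at $s^{1-\gamma_i}$ rather than after; since post-composition with $\pi$ commutes with evaluation, the computation is identical.
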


\begin{proof}

Denote $(c^{s^{\gamma_1}})^{\beta_1}(c^{s^{\gamma_2}})^{\beta_2} \ldots (c^{s^{\gamma_n}})^{\beta_n}$ by $f$. The active part of $f(s^{-\gamma_i+1})$, regarded as an element of $K$, is equal to $z^{\sum_{j \in B_i}{\beta_j}}$ for $i= 1, \ldots, n$, because 
\[f(s^{-\gamma_i+1})= (c^{s^{\gamma_1}})^{\beta_1}(c^{s^{\gamma_2}})^{\beta_2} \ldots (c^{s^{\gamma_n}})^{\beta_n} (s^{-\gamma_i+1})\]
\[= [(c^{s^{\gamma_1}})^{\beta_1}(s^{-\gamma_i+1})][(c^{s^{\gamma_2}})^{\beta_2}(s^{-\gamma_i+1})] \ldots [(c^{s^{\gamma_n}})^{\beta_n}(s^{-\gamma_i+1})]\]
 and  
$(c^{s^{\gamma_j}})^{\beta_j}(s^{-\gamma_i+1})=(c(s))^{\beta_j}=z^{\beta_j}$ if $\gamma_j=\gamma_i$ and $(c^{s^{\gamma_j}})^{\beta_j}(s^{-\gamma_i+1})=(c(s^{\gamma_j-\gamma_i+1}))^{\beta_j} \in gp \langle b^{(i)} ~|~i \in \mathbb{N} \rangle$ if $\gamma_j \neq \gamma_i$.

In the same way as we obtained the rewriting (\ref{eq 2}), we can present $f(s^{-\gamma_i+1})$ in the form 
\[
((b^{(i_1)})^{z^{\eta_1}})^{\xi_1} ((b^{(i_2)})^{z^{\eta_2}})^{\xi_2} \ldots  ((b^{(i_m)})^{z^{\eta_m}})^{\xi_m}z^{\epsilon},
\]
for some integers $\eta_k, \xi_k, \epsilon$. By the analogy with the equations (\ref{eq 0}), we have $\epsilon=\sum_{j\in B_i}{\beta_j}$.
Hence, if $f(s^{-\gamma_i+1})=1$, then $\sum_{j\in B_i}{\beta_j}$ should be $0$. Also, trivially $\gamma$ should be $0$ too.

\end{proof}
~\\
Further we will use the notation 
$\gamma_0 =  max \{ |\gamma_1|, \ldots, |\gamma_n| \} $. 

\begin{lemma}
\label{lem 2}

If $|\mu|>3\gamma_0$ and $\sum_{j\in B_i}{\beta_j}=0,  i=1, \ldots, n$, then 
\[(c^{s^{\gamma_1}})^{\beta_1}(c^{s^{\gamma_2}})^{\beta_2} \ldots (c^{s^{\gamma_n}})^{\beta_n} (s^{\mu})=1.\]

\end{lemma}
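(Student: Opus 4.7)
The plan is to evaluate the product $f := (c^{s^{\gamma_1}})^{\beta_1}\cdots(c^{s^{\gamma_n}})^{\beta_n}$ pointwise at $s^{\mu}$ and then exploit the sparse support of $c$. Each factor lies in the base subgroup $K^S$, so $f$ itself lies in $K^S$ and its values multiply pointwise. Using the identity $(c^{s^{\gamma}})(x) = c(xs^{\gamma})$, the evaluation becomes
\[
f(s^{\mu}) \;=\; \prod_{j=1}^{n}\bigl(c(s^{\mu+\gamma_j})\bigr)^{\beta_j}.
\]
By the definition of $c$, the factor $c(s^{\mu+\gamma_j})$ is nontrivial only when $\mu+\gamma_j$ belongs to $P := \{1\}\cup\{2^i : i \ge 1\}$. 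If $\mu < -3\gamma_0$, every $\mu+\gamma_j$ satisfies $\mu+\gamma_j \le \mu+\gamma_0 < -2\gamma_0 \le 0$, so all factors are trivial and $f(s^{\mu})=1$. One may therefore assume $\mu > 3\gamma_0$.

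The main (and really the only delicate) step will be a sparsity argument: the interval $[\mu-\gamma_0,\,\mu+\gamma_0]$ contains at most one element of $P$. If two distinct powers $2^i < 2^{i'}$ (with $i,i'\ge 1$) both lay in this window, then $2^{i'}-2^i \le 2\gamma_0$ would force $2^i \le 2\gamma_0$, while simultaneously $2^i \ge \mu-\gamma_0 > 2\gamma_0$, a contradiction. The mixed case (one of the two points equal to $1$, the other to some $2^i$ with $i\ge 1$) is ruled out by an analogous estimate, which pins $\gamma_0$ down to $0$ and then contradicts $i\ge 1$. Hence there is at most one value $k^* \in P$ occurring among the $\mu+\gamma_j$.

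To conclude, I would set $J = \{j : \mu+\gamma_j = k^*\}$ (the empty set if no such $k^*$ exists, in which case $f(s^{\mu})=1$ is immediate). Every $j \in J$ satisfies $\gamma_j = k^*-\mu$, so $J$ coincides with $B_i$ for any $i \in J$. The nontrivial factors of the product are all powers of the single element $c(s^{k^*}) \in K$, and hence commute among themselves and with the identity factors contributed by $j \notin J$. The product therefore collapses to
\[
f(s^{\mu}) \;=\; \bigl(c(s^{k^*})\bigr)^{\sum_{j\in J}\beta_j} \;=\; \bigl(c(s^{k^*})\bigr)^{0} \;=\; 1,
\]
by the hypothesis $\sum_{j\in B_i}\beta_j = 0$.
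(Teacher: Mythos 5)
Your proof is correct and rests on the same two ingredients as the paper's: the sparsity of the support of $c$ (the window $[\mu-\gamma_0,\mu+\gamma_0]$ meets the powers of $2$ in at most one point when $|\mu|>3\gamma_0$, which is exactly the paper's estimate $2^{x_2}\le 2^{x_1}-2^{x_2}=\gamma_k-\gamma_l\le 2\gamma_0$), and the collapse of the product over a single class $B_i$ to $\bigl(c(s^{k^*})\bigr)^{\sum_{j\in B_i}\beta_j}=1$. The only difference is presentational: you argue directly, while the paper assumes $f(s^{\mu})\neq 1$ and derives $|\mu|\le 3\gamma_0$ by contradiction.
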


\begin{proof}
Assume $f(s^{\mu})=(c^{s^{\gamma_1}})^{\beta_1}(c^{s^{\gamma_2}})^{\beta_2} \ldots (c^{s^{\gamma_n}})^{\beta_n}(s^{\mu}) \neq 1$.\\
First, we will show that there exist integers $k$ and $l$, $ 1 \leq k \neq l \leq n$, such that $\gamma_k \neq \gamma_l$ and $(c^{s^{\gamma_k}})^{\beta_k}(s^{\mu}) \neq 1$, $(c^{s^{\gamma_l}})^{\beta_l}(s^{\mu}) \neq 1$. Indeed, if this does not take place, then for a fixed $k$, $(c^{s^{\gamma_k}})^{\beta_k}(s^{\mu}) \neq 1$ implies that for arbitrary $l \notin B_k$ (i.e., for $\gamma_l \neq \gamma_k$) we have $(c^{s^{\gamma_l}})^{\beta_l}(s^{\mu}) = 1$. But this means that
 $f(s^{\mu}) = \prod_{i_j \in B_k}{(c^{s^{\gamma_{i_j}}})^{\beta_{i_j}}(s^{\mu})} = (c^{s^{\gamma_k}})^{\sum_{i_j \in B_k}{\beta_{i_j}}}(s^{\mu})=(c^{s^{\gamma_k}})^0(s^{\mu})=1$, a contradiction.

Thus, there exist $k$ and $l$, such that $1 \leq k \neq l \leq n$, $\gamma_k \neq \gamma_l$, $~(c^{s^{\gamma_k}})^{\beta_k}(s^{\mu})\neq 1$ and $(c^{s^{\gamma_l}})^{\beta_l}(s^{\mu})\neq 1$. 
But for any $\gamma \in \mathbb{Z}, (c^{s^{\gamma}})^{\beta}(s^{\mu})=(c(s^{\mu+\gamma}))^{\beta} \neq 1$ only if $\mu+\gamma=2^x$ i.e., $\mu = 2^x-\gamma$ for some nonnegative integer $x$.
Therefore, there exist nonnegative integers $x_1$ and $x_2$, such that $ x_1 \neq x_2 $ and $\mu = 2^{x_1}-\gamma_k = 2^{x_2}-\gamma_l$. Assume $x_1 > x_2$, then $2^{x_2}\leq 2^{x_1}-2^{x_2}= \gamma_k - \gamma_l \leq 2\gamma_0$. Hence
$|\mu|= |2^{x_2}-\gamma_l| \leq 2^{x_2}+|\gamma_l| \leq 2\gamma_0+\gamma_0=3\gamma_0$, a contradiction.

\end{proof}
~\\
 
A direct consequence of Lemma \ref{lem 1} and Lemma \ref{lem 2} is the following

\begin{lemma}
\label{lem 4444}
The word
\[ w = (c^{s^{\gamma_1}})^{\beta_1}(c^{s^{\gamma_2}})^{\beta_2} \ldots (c^{s^{\gamma_n}})^{\beta_n}s^{\gamma}\]
is trivial in $G$ if and only if\\
\begin{equation}
\label{eq 3}
  \gamma = 0 \mbox{~and} \sum_{j\in B_i}{\beta_j}=0 \mbox{~for~} i=1, \ldots, n
\end{equation}
 and \\
\begin{equation}
\label{eq 4}
(c^{s^{\gamma_1}})^{\beta_1}(c^{s^{\gamma_2}})^{\beta_2} \ldots (c^{s^{\gamma_n}})^{\beta_n}(s^{\mu})
\end{equation}
 is trivial in $G$ for all $-3\gamma_0 \leq \mu \leq 3\gamma_0$. 
\end{lemma}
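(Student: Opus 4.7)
The plan is to observe that $w = f s^{\gamma}$ with $f = (c^{s^{\gamma_1}})^{\beta_1} \cdots (c^{s^{\gamma_n}})^{\beta_n}$ lying in the base group $K^{S}$ of $K\, Wr\, S$. Hence $w = 1$ in $G$ if and only if the active part $s^{\gamma}$ is trivial (equivalently $\gamma = 0$) and the passive part $f$ is the identity element of $K^{S}$, meaning $f(s^{\mu}) = 1$ in $K$ for every $\mu \in \mathbb{Z}$. The two halves of the lemma correspond to the two directions of this equivalence, with the key move being to split the infinite family $\{f(s^{\mu}) = 1 : \mu \in \mathbb{Z}\}$ into a finite ``core'' and a tail that is controlled automatically by Lemma~\ref{lem 2}.

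For the forward implication, assume $w = 1$. Lemma~\ref{lem 1} applies directly and yields both $\gamma = 0$ and $\sum_{j \in B_i}{\beta_j} = 0$ for every $i$, which is (\ref{eq 3}). Moreover, $w = 1$ forces $f$ to be the identity element of $K^{S}$, so in particular $f(s^{\mu}) = 1$ for each of the finitely many $\mu$ with $-3\gamma_0 \le \mu \le 3\gamma_0$, giving (\ref{eq 4}).

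For the backward implication, assume (\ref{eq 3}) and (\ref{eq 4}). Since $\gamma = 0$, one has $w = f$, so it suffices to verify that $f(s^{\mu}) = 1$ for every $\mu \in \mathbb{Z}$. Partition $\mathbb{Z}$ into the range $[-3\gamma_0,\, 3\gamma_0]$ and its complement. For $|\mu| > 3\gamma_0$, the hypothesis $\sum_{j \in B_i}{\beta_j} = 0$ together with Lemma~\ref{lem 2} yields $f(s^{\mu}) = 1$. For $|\mu| \le 3\gamma_0$, assumption (\ref{eq 4}) is exactly the desired statement. Combining the two ranges shows that $f$ is the identity element of $K^{S}$, and hence $w = f \cdot s^{0} = 1$ in $G$.

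No step is an essential obstacle — the lemma is a packaging of Lemmas~\ref{lem 1} and~\ref{lem 2}. The decisive point, which will matter later, is the reduction it effects: the a priori infinite condition $f \equiv 1$ is replaced by an explicit finite checklist, namely at most $6\gamma_0 + 1$ equations of the form $f(s^{\mu}) = 1$ in $K$ together with $n$ linear relations on the exponents $\beta_j$. This finite and effective characterisation is what will drive the recursion of the word-problem algorithm for $G$ into the group $K$, and ultimately into $H$.
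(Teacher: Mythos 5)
Your proof is correct and follows exactly the route the paper intends: the paper states this lemma without proof as ``a direct consequence of Lemma~\ref{lem 1} and Lemma~\ref{lem 2},'' and your argument---Lemma~\ref{lem 1} for the forward direction, and Lemma~\ref{lem 2} to dispose of the range $|\mu|>3\gamma_0$ in the backward direction---is precisely that deduction, spelled out. No discrepancy to report.
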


Note that since $\gamma_0 \leq \left\| w \right\|$, in Lemma \ref{lem 4444} $-3\gamma_0 \leq \mu \leq 3\gamma_0$ can be replaced by $-3\left\| w \right\| \leq \mu \leq 3\left\|w \right\|$.\\
For the further we denote the element in (\ref{eq 4}) by $f_{\mu}$.\\
~\\
Exactly in analogy with the equation (\ref{eq 00}), the condition (\ref{eq 3}) can be checked in time
\begin{equation}
\label{eq new 1}
O(log_2(\left\|w\right\|)\left\| w \right\|^2).
\end{equation}

~\\
~\\
We have 
\begin{equation}
\label{eq 5}
f_{\mu}
 = [(c^{s^{\gamma_1}})^{\beta_1}(s^{\mu})][(c^{s^{\gamma_2}})^{\beta_2}(s^{\mu})] \ldots [(c^{s^{\gamma_n}})^{\beta_n}(s^{\mu})]
\end{equation}
and 
\begin{equation}
\label{eq 55}
(c^{s^{\gamma_i}})(s^{\mu})=
                    \left\{
                      \begin{array}{lll}
                       b^{(log_2(\gamma_i+\mu) )} & \mbox{if $\gamma_i+\mu$ is a natural power of 2,}\\
                       z & \mbox{if $\gamma_i+\mu=1$,}\\
												1& \mbox{otherwise.}
                     \end{array}
                    \right. 
\end{equation}
Thus,

in the case $\gamma_0 + \mu \leq 0$, we have $f_{\mu}=1$, otherwise, $f_{\mu}$ is an element in $gp \langle 1, z, b^{(j)} ~|~ 1\leq j \leq \left\lfloor log_2(\mu+\gamma_0)\right\rfloor\rangle $, whence can be presented in the form
\begin{equation}
\label{eq 56}
	z^{\zeta_0}(b^{(i_1)})^{\xi_1}z^{\zeta_1} (b^{(i_2)})^{\xi_2} \ldots  (b^{(i_m)})^{\xi_m}z^{\zeta_m},
\end{equation}
where $\zeta_j$-s are nonnegative and $\xi_j$-s are positive integers, $0<i_j \leq \left\lfloor log_2(\mu + \gamma_0)\right\rfloor$ and $m\leq n\leq \left\|w\right\|$. Also, 
$\zeta_j$-s, $i_j$-s and $\xi_j$-s can be calculated based on (\ref{eq 55}). This calculation can be done in time
$
O(n (log_2(\gamma_0 + \mu)) ) \leq O( \left\| w \right\| \left\lfloor log_2(\gamma_0 + \mu) \right\rfloor ). 
$
Since we are interested only in the case $-3\gamma_0 \leq \mu \leq 3\gamma_0$, for this case we have $\left\lfloor log_2(\gamma_0+\mu)\right\rfloor \leq \left\lfloor log_2(\gamma_0+3\gamma_0)\right\rfloor \leq 
 Lg(\left\| w \right\|).$ The following inequality put the just calculated time estimation in a more convenient form for our purposes
\begin{equation}
\label{eq 57}
O( \left\| w \right\| \left\lfloor log_2(\gamma_0 + \mu) \right\rfloor )\leq O(\left\| w \right\| Lg(\left\| w \right\|))
\end{equation}

After applying a transformation similar to the one, which led from (\ref{eq 1}) to (\ref{eq 2}), but this time based on the identity $z^{\zeta}(b^{(i)})^{\xi}=((b^{(i)})^{z^{\zeta}})^{\xi}z^{\zeta}$, (\ref{eq 5}) can be rewritten as
\begin{equation}
\label{eq 6}
((b^{(i_1)})^{z^{\eta_1}})^{\xi_1} ((b^{(i_2)})^{z^{\eta_2}})^{\xi_2} \ldots  ((b^{(i_m)})^{z^{\eta_m}})^{\xi_m}z^{\eta},
\end{equation}
where in analogy with the equation (\ref{eq 0}), we have
\begin{equation}
\label{eq 99}
\eta = \sum_{i=0}^m \zeta_i, ~ \eta_j = \sum_{i=0}^{j-1} \zeta_i.
\end{equation}
 
Using this formula, as in the case of equation (\ref{eq 00}), we obtain that if the exponential coefficients of (\ref{eq 56}) are given, the (exponential) coefficients of (\ref{eq 6}) can be calculated in time $O(log_2( \left\| w \right\| ) \left\| w \right\|^2).$ Hence, by this and by (\ref{eq 00}) and (\ref{eq 57}), we obtain that given the word $w$ in the form (\ref{eq 1}), the coefficients of the rewriting (\ref{eq 6}) of $w$ can be calculated in time
\begin{equation}
\label{eq 61}
 O( \left\| w \right\| Lg(\left\| w \right\|)) + O(log_2( \left\| w \right\| ) \left\| w \right\|^2) + O(log_2( \left\| w \right\| ) \left\| w \right\|^2)\leq
  O( \left\| w \right\|^2 Lg(\left\| w \right\|)).
\end{equation}

~\\
At this point, by Lemma \ref{lem 4444}, we got that the word problem in $G$ is reduced to the word problem in $K$, and according to (\ref{eq 61}), this reduction can be done in polynomial time. In turn, the word problem in $K$ can be reduced to the word problem in $H$.  Indeed,
$((b^{(i)})^{z^{\eta}})^{\xi}(z^{\nu})= (b^{(i)}(z^{\eta+\nu}))^{\xi} = (a^{(i)})^{\xi}$ if $\eta+\nu>0$ and it is $1$ if $\eta + \nu \leq 0$. Therefore,
 
the function $((b^{(i_1)})^{z^{\eta_1}})^{\xi_1} ((b^{(i_2)})^{z^{\eta_2}})^{\xi_2} \ldots  ((b^{(i_m)})^{z^{\eta_m}})^{\xi_m}(z^{\nu})$ is constant and equal to 
$ (a^{(i_1)})^{\xi_1}(a^{(i_2)})^{\xi_2} \ldots (a^{(i_m)})^{\xi_m}$ for all $ \nu \geq \eta_0$, and equal to $1$ for all $ \nu \leq -\eta_0$, where by $\eta_0$ we denote $max\{|\eta_1|, \ldots, |\eta_m|\}$. Thus, in order $f_{\mu}$ to be trivial, a necessary and sufficient condition is that $\eta=0$ and
\[
((b^{(i_1)})^{z^{\eta_1}})^{\xi_1} ((b^{(i_2)})^{z^{\eta_2}})^{\xi_2} \ldots  ((b^{(i_m)})^{z^{\eta_m}})^{\xi_m}(z^{\nu})
\]
is trivial for all $|\nu| \leq \eta_0$ and consequently, for all $|\nu| \leq \left\|w\right\|$. Thus, we reduced the word problem in $G$ to the word problem in $H$. Moreover, according to (\ref{eq 61}) this reduction is done in polynomial time, which particularly means that if the word problem in $H$ belongs to the class $P$ or $NP$, then the word problem in $G$ belongs to the same class.

Since above we described a procedure of reducing the word problem in $G$ to the word problem in $H$, we conclude that the word problem in $G$ is solvable, whenever the word problem in $H$ is solvable. Now let us finish the proof of the part $(i)$ of Theorem \ref{th 1}.

First, notice that since in (\ref{eq 6}) $0<i_j \leq log_2(\mu + \gamma_0)$, we have that 
\[
((b^{(i_1)})^{z^{\eta_1}})^{\xi_1} ((b^{(i_2)})^{z^{\eta_2}})^{\xi_2} \ldots  ((b^{(i_m)})^{z^{\eta_m}})^{\xi_m}(z^{\nu})
\]
\begin{equation}
\label{eq 1515}
=[((b^{(i_1)})^{z^{\eta_1}})^{\xi_1} (z^{\nu})] [((b^{(i_2)})^{z^{\eta_2}})^{\xi_2}(z^{\nu})] \ldots  [((b^{(i_m)})^{z^{\eta_m}})^{\xi_m}(z^{\nu})],
\end{equation}
regarded as a word in $S=\{ a^{(1)}, a^{(2)}, \ldots \}$, belongs to $S_{\left\lfloor log_2(\gamma_0+\mu)\right\rfloor}^* \subset S_{Lg(\left\|w\right\|)}^*$ (recall that by $S_n^*$ we denoted the set of words $\{ a^{(1)}, a^{(2)}, \ldots, a^{(n)} \}^*$). 
Hence, taking into account the inequality (\ref{eq 0000}), we have
\[
\mathfrak{L}(((b^{(i_1)})^{z^{\eta_1}})^{\xi_1} ((b^{(i_2)})^{z^{\eta_2}})^{\xi_2} \ldots  ((b^{(i_m)})^{z^{\eta_m}})^{\xi_m}(z^{\nu}) ) \leq mLg(Lg(\left\|w\right\|)) 
\]
\begin{equation}
\label{eq 888}
\leq \left\| w \right\|Lg(Lg(\left\|w\right\|))
\end{equation}
The last inequality means that triviality of the word 
\[
((b^{(i_1)})^{z^{\eta_1}})^{\xi_1} ((b^{(i_2)})^{z^{\eta_2}})^{\xi_2} \ldots  ((b^{(i_m)})^{z^{\eta_m}})^{\xi_m}(z^{\nu})
\in S_{Lg(\left\|w\right\|)}^*
\]
can be checked in time $O(f(\left\| w \right\|Lg(Lg(\left\|w\right\|)) )) $.
Thus, since $f_{\mu}$ is trivial iff the corresponding word (\ref{eq 1515}) is trivial in $H$ for all $|\nu| \leq \left\| w \right\|$, we conclude that given the rewriting  (\ref{eq 6}) (i.e., the coefficients in (\ref{eq 6})) of $f_{\mu}$, we can check its triviality in time
\begin{equation}
\label{eq 7}
O(2\left\|w\right\| f(\left\| w \right\|Lg(Lg(\left\|w\right\|)) )) =
O(\left\| w \right\| f( \left\| w \right\|Lg(Lg(\left\|w\right\|))),
\end{equation}
were $f$, as we mentioned in the statement of Theorem \ref{th 1}, is the time complexity function of the word problem in $H$ with respect to the encoding described in the introduction.
 
Combined this with (\ref{eq 61}), we conclude that triviality of $f_{\mu}$, when it is given in the form (\ref{eq 5}), can be checked in time  
\[
O( \left\| w \right\|^2 Lg(\left\| w \right\|) + \left\| w \right\| f( \left\| w \right\|Lg(Lg(\left\|w\right\|)))).
\] 
Hence, since $\gamma_0 \leq \left\| w \right\| $, triviality of (\ref{eq 4}), for all appropriate $\mu$-s, can be checked in time 
\[
O( \left\| w \right\|^3 Lg(\left\| w \right\|) + \left\| w \right\|^2 f( \left\| w \right\|Lg(Lg(\left\|w\right\|)))).
\]
~\\
Finally, combined this with the formula (\ref{eq new 1}) we conclude that one can check triviality of $w$ in time

\[
O(log_2(\left\| w \right\|)\left\| w \right\|^2)+O( \left\| w \right\|^3 Lg(\left\| w \right\|) + \left\| w \right\|^2 f( \left\| w \right\|Lg(Lg(\left\|w\right\|))))
\]
\begin{equation}
\label{eq 8}
=O( \left\| w \right\|^3 Lg(\left\| w \right\|) + \left\| w \right\|^2 f( \left\| w \right\|Lg(Lg(\left\|w\right\|)))).
\end{equation}

~\\
At this point we showed that the word problem for the group $G$ is solvable and also proved the part $(i)$ of Theorem \ref{th 1}.
~\\

Now let us solve the membership problem for the subgroup $H < G$, i.e., describe a procedure for determining whether the element in $G$ corresponding to $w$ belongs to the image of $H$ under the map $\phi$ or not.\\
~\\
The following lemma is a slightly modified version of Lemma \ref{lem 1}.

\begin{lemma}
\label{lem 3}

If the word (\ref{eq 1}), regarded as an element of $G$, belongs to $\phi(H)$, then $\sum_{j\in B_i}{\beta_j}=0$, for all $i= 1, \ldots, n$.

\end{lemma}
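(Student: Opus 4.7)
The strategy is to exploit the explicit description of $\phi(H)$ as a subset of the base subgroup $K^S$ of the wreath product $K Wr S$. Each generator $\phi(a^{(i)}) = [c, c^{s^{2^i-1}}]$ of $\phi(H)$ is a commutator of two elements lying in $K^S$ (since $c \in K^S$), and hence itself lies in $K^S$; indeed, as was already checked in the construction of $c$, the element $[c, c^{s^{2^i-1}}]$, regarded as a function $S \to K$, is supported only at $s$ and takes the value $[z, b^{(i)}]$ there. Since multiplication in $K^S$ is pointwise, every element of $\phi(H)$ is a function $S \to K$ whose support is contained in $\{s\}$ and whose value at $s$ lies in the subgroup $gp\langle [z, b^{(i)}] : i \in \mathbb{N} \rangle$. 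Moreover, each $[z, b^{(i)}]$ has trivial $Z$-active part in $H Wr Z$ (it is a commutator of elements with equal $Z$-active parts), and hence so does any product of such commutators. In other words, every element of $\phi(H)$, evaluated at any $s^m \in S$, yields an element of the base subgroup $H^Z \leq H Wr Z$, i.e.\ has trivial $Z$-active part.

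Now assume the word $w$ from (\ref{eq 1}), regarded as an element of $G$, belongs to $\phi(H)$, and write $w = f s^\gamma$ where $f = (c^{s^{\gamma_1}})^{\beta_1}(c^{s^{\gamma_2}})^{\beta_2} \cdots (c^{s^{\gamma_n}})^{\beta_n}$ is its passive part. By the preceding paragraph, the $Z$-active part of $f(s^m)$ must be trivial for every integer $m$. Specializing to $m = -\gamma_i + 1$ for each $i \in \{1, \ldots, n\}$ and invoking the computation already carried out in the proof of Lemma \ref{lem 1}, the $Z$-active part of $f(s^{-\gamma_i + 1})$ is exactly $z^{\sum_{j \in B_i} \beta_j}$. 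Triviality then forces $\sum_{j \in B_i} \beta_j = 0$ for every $i$, which is the desired conclusion.

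I do not expect any real obstacle: the only new ingredient is the structural observation about $\phi(H)$ in the first paragraph, and once it is in place the conclusion follows from exactly the same calculation used in Lemma \ref{lem 1}. From this perspective Lemma \ref{lem 3} should be viewed as a mild strengthening of (half of) Lemma \ref{lem 1}, in which the hypothesis ``$w = 1$ in $G$'' is relaxed to ``$w \in \phi(H)$'', and the conclusion about $\gamma$ is dropped since elements of $\phi(H)$ need not lie in the identity coset of the $S$-active part (though of course they do: $\phi(H) \subseteq K^S$).
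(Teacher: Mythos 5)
Your proof is correct and follows essentially the same route as the paper's: the paper likewise observes that an element of $\phi(H)$, viewed as a function $S \to K$, vanishes at every $s^{\mu}$ with $\mu \neq 1$ (and at $s$ takes a value with trivial active part), and then reuses the computation from Lemma \ref{lem 1} showing that the active part of $f(s^{-\gamma_i+1})$ is $z^{\sum_{j\in B_i}\beta_j}$. One tiny quibble: $z$ and $b^{(i)}$ do not have \emph{equal} active parts (they are $z$ and $1$ respectively); the commutator of the active parts is trivial simply because $Z$ is abelian (or because the paper already computed $[z,b^{(i)}]\in H^Z$), but this does not affect your argument.
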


\begin{proof}
This is true, because, if $ w \in \phi(H)$, then $w(s^{\mu})=1$ for all $\mu \neq 1$.
The rest of the proof is the same as the proof of Lemma \ref{lem 1}.
\end{proof}

It is clear that $w\in \phi(H)$ iff in the rewriting (\ref{eq 2}) $\gamma=0$ and
\[(c^{s^{\gamma_1}})^{\beta_1} (c^{s^{\gamma_2}})^{\beta_2} \ldots  (c^{s^{\gamma_n}})^{\beta_n} (s^{\mu}) \in gp \langle [z,b^{(i)}] ~|~ i\in \mathbb{N} \rangle \] if $\mu=1$ and  it is $1$ if $\mu \neq 1$.

Combined this with Lemma \ref{lem 2} and Lemma \ref{lem 3} we get the following

\begin{lemma}
\label{lem 4}

$w \in \phi(H)$ iff $\gamma=0$, $\sum_{j\in B_i}{\beta_j}=0$ for ${i=1, \ldots, n}$,
\[(c^{s^{\gamma_1}})^{\beta_1} (c^{s^{\gamma_2}})^{\beta_2} \ldots  (c^{s^{\gamma_n}})^{\beta_n}(s^{\mu})=1\]
for $\mu \neq 1 \& |\mu|\leq 3\gamma_0$, ($\gamma_0=\max \{|\gamma_1|, |\gamma_2|, \ldots, |\gamma_n| \}$) and 
\[(c^{s^{\gamma_1}})^{\beta_1} (c^{s^{\gamma_2}})^{\beta_2} \ldots  (c^{s^{\gamma_n}})^{\beta_n}(s) \in gp\langle [z,b^{(i)}] ~|~ i \in \mathbb{N} \rangle.\]
\qed

\end{lemma}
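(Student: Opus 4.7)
The plan is to combine the characterization of $\phi(H)$ stated just above the lemma with the three previous lemmas, so that the task reduces to checking finitely many function values.

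First, I would record the external description of $\phi(H)$ as a subset of the base group $K^S$ of $K Wr S$: an element $fb \in K Wr S$ lies in $\phi(H)$ exactly when its active part is trivial ($b = 1$) and, as a function $S\to K$, the passive part $f$ is supported only at $s$ with $f(s) \in gp\langle [z,b^{(i)}] \mid i \in \mathbb{N}\rangle$. This is immediate from the embedding $\phi: a^{(i)}\mapsto [c, c^{s^{2^i-1}}]$ and the computation already carried out showing that each $[c,c^{s^{2^i-1}}]$ has this shape.

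For the ``only if'' direction, suppose $w \in \phi(H)$. Rewriting $w$ as in (\ref{eq 2}), the active part contributes $s^\gamma$, so $\gamma = 0$. Lemma \ref{lem 3} (whose hypothesis is exactly $w \in \phi(H)$) yields $\sum_{j \in B_i} \beta_j = 0$ for each $i$. Finally, by the description of $\phi(H)$ recalled above, the passive function $f = (c^{s^{\gamma_1}})^{\beta_1} \cdots (c^{s^{\gamma_n}})^{\beta_n}$ must vanish at every $s^\mu$ with $\mu \neq 1$ (in particular for those with $|\mu|\leq 3\gamma_0$), and must take a value in $gp\langle [z,b^{(i)}]\mid i\in \mathbb{N}\rangle$ at $s$. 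These are exactly the four conditions in the statement.

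For the ``if'' direction, assume the four conditions hold. From $\sum_{j \in B_i}\beta_j = 0$, Lemma \ref{lem 2} gives $f(s^\mu) = 1$ for every $\mu$ with $|\mu| > 3\gamma_0$. The assumed condition covers $\mu$ in the range $|\mu| \leq 3\gamma_0$, $\mu \neq 1$, so $f(s^\mu) = 1$ for every $\mu \neq 1$. At $\mu = 1$ the assumed condition places $f(s)$ inside $gp\langle [z, b^{(i)}]\mid i\in \mathbb{N}\rangle$. Together with $\gamma = 0$ (which makes the active part trivial), this says precisely that $w$ has the shape of an element of $\phi(H)$ under the description recalled in the first paragraph, so $w \in \phi(H)$.

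There is no genuine new obstacle beyond Lemmas \ref{lem 1}--\ref{lem 3}; the proof is really a bookkeeping step. The one point that deserves care is making sure the three possibilities for $\mu$ — the range $|\mu|>3\gamma_0$ handled by Lemma \ref{lem 2}, the range $|\mu|\leq 3\gamma_0$ with $\mu\neq 1$ handled by hypothesis, and the single value $\mu=1$ handled by the last condition — together cover all of $\mathbb{Z}$, which they do.
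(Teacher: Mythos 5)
Your proof is correct and follows the same route as the paper: the paper also obtains Lemma \ref{lem 4} by combining the observation that $w\in\phi(H)$ iff $\gamma=0$, the passive part vanishes at $s^{\mu}$ for $\mu\neq 1$, and takes a value in $gp\langle [z,b^{(i)}]\mid i\in\mathbb{N}\rangle$ at $s$, with Lemma \ref{lem 3} (for the necessity of $\sum_{j\in B_i}\beta_j=0$) and Lemma \ref{lem 2} (to dispose of all $\mu$ with $|\mu|>3\gamma_0$). The bookkeeping of the three ranges of $\mu$ is exactly the intended argument.
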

~\\
So the membership problem for $\phi(H) < G$ is reduced to the membership problem for 
$ H<K $. (Since $ a^{(i)} \mapsto [z, b^{(i)}]$ induces an embedding of $H$ into $K$, we can regard $H$ as a subgroup of the group $K$. )

Considered the rewriting (\ref{eq 6}), let us use the following notation 
\[
\bar{b}= ((b^{(i_1)})^{z^{\eta_1}})^{\xi_1} ((b^{(i_2)})^{z^{\eta_2}})^{\xi_2} \ldots  ((b^{(i_m)})^{z^{\eta_m}})^{\xi_m}.
\]

\begin{lemma}
\label{lem 5555}
The element   in (\ref{eq 6}) belongs to $H$ iff
$\eta = 0$ and $\bar{b} (z^{\pm 1})=\bar{b} (z^{\pm 2}) = \ldots = \bar{b} (z^{\pm \eta_0}) =1 $.

\end{lemma}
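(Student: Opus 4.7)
My plan is to detect whether $\bar{b}\,z^\eta$ lies in $H$ by evaluating its passive function at only finitely many points of $Z$, exploiting the fact that $H$ sits inside $K$ as the subgroup of elements whose passive function is supported at a single point.

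Under the embedding $H\hookrightarrow K$ sending $a^{(i)}\mapsto[z,b^{(i)}]$, each generator $[z,b^{(i)}]$ has trivial active part and is supported only at $z^0=1\in Z$, where it takes the value $a^{(i)}$. Hence the subgroup $H\leq K$ consists precisely of those elements $f\cdot z^0$ whose passive function $f\in H^Z$ is supported at $\{1\}$. Applied to the element $\bar{b}\,z^\eta$ of (\ref{eq 6}), this says $\bar{b}\,z^\eta\in H$ iff $\eta=0$ and $\bar{b}(z^k)=1$ for every nonzero integer $k$. The forward direction of the lemma follows immediately by restricting to $1\leq|k|\leq\eta_0$.

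For the converse I would use the identity $((b^{(i_j)})^{z^{\eta_j}})^{\xi_j}(z^k)=b^{(i_j)}(z^{k+\eta_j})^{\xi_j}$, which equals $(a^{(i_j)})^{\xi_j}$ when $k+\eta_j>0$ and $1$ otherwise, together with the fact that every factor of $\bar{b}$ has trivial active part, to obtain
\[\bar{b}(z^k)=\prod_{j:\,k+\eta_j>0}(a^{(i_j)})^{\xi_j}\]
in the order $j=1,\ldots,m$. For $k\leq-\eta_0$ the indexing set is empty, so $\bar{b}(z^k)=1$ automatically. For $k>\eta_0$ the indexing set is the whole $\{1,\ldots,m\}$, so $\bar{b}(z^k)$ is the saturated constant already identified in the discussion preceding the lemma, and the hypothesized triviality at the top of the positive range pins this constant to $1$.

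The main technical point is the positive-side saturation: because $b^{(i)}$ vanishes at $z^0$ rather than $z^1$, the activation set $\{j:k+\eta_j>0\}$ stabilizes slightly later than its negative-side counterpart becomes empty. One must verify carefully that, by the definition $\eta_0=\max_j|\eta_j|$, the hypothesized value $\bar{b}(z^{\eta_0})=1$ is already enough to force the saturated constant to be trivial; with this in hand, combining the three regimes $k\leq-\eta_0$, $1\leq|k|\leq\eta_0$, and $k>\eta_0$ yields $\bar{b}(z^k)=1$ for every $k\neq 0$, and the membership question for $\bar{b}\,z^\eta$ in $H$ has been reduced to finitely many evaluations of $\bar{b}$, as required.
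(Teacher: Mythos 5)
Your strategy coincides with the paper's: identify $\phi(H)$ inside $K$ as the set of elements with trivial active part whose passive function is supported at $z^0$, observe that $\bar{b}$ vanishes identically for $\nu\leq-\eta_0$ and is eventually constant on the positive side, and conclude that finitely many evaluations decide membership. The paper's proof is a one-line compression of exactly this argument, and your forward direction and your treatment of the regimes $k\leq-\eta_0$ and $1\leq|k|\leq\eta_0$ are fine.

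The step you defer, however, is precisely where the argument is delicate, and the justification you propose for it is not sufficient. You claim that ``by the definition $\eta_0=\max_j|\eta_j|$'' the hypothesis $\bar{b}(z^{\eta_0})=1$ already forces the saturated constant $(a^{(i_1)})^{\xi_1}\cdots(a^{(i_m)})^{\xi_m}$ to be trivial. For arbitrary integer exponents this is false, and so is the lemma: take $m=1$, $\xi_1=1$, $\eta_1=-1$, hence $\eta_0=1$ and $\bar{b}=(b^{(i_1)})^{z^{-1}}$; then $\bar{b}(z)=b^{(i_1)}(z^{0})=1$ and $\bar{b}(z^{-1})=b^{(i_1)}(z^{-2})=1$, so all your hypotheses hold, yet $\bar{b}(z^{2})=a^{(i_1)}\neq 1$ and $\bar{b}\notin\phi(H)$. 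What rescues the statement is a fact you never invoke: the exponents produced by (\ref{eq 99}) are partial sums of the non-negative integers $\zeta_i$, so $0\leq\eta_1\leq\cdots\leq\eta_m=\eta_0$. With this, for $k=\eta_0\geq 1$ one has $k+\eta_j\geq 1>0$ for every $j$, the activation set at $k=\eta_0$ is already all of $\{1,\ldots,m\}$, and $\bar{b}(z^{\eta_0})$ is literally the saturated constant, which your hypothesis then kills; the absolute values in $\eta_0=\max_j|\eta_j|$ are a red herring. (The degenerate case $\eta_0=0$, where your list of conditions is empty but the saturated value need not be trivial, is an imprecision in the statement of the lemma itself, shared with the paper's own proof, and is repaired by additionally checking $\bar{b}(z)=1$.)
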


\begin{proof}
Indeed, it follows from the fact that $\bar{b}(1) \in H$ and $\bar{b}(z^{\nu})=1$ for $\nu<-\eta_0$
and $\bar{b}(z^{\nu})=\bar{b}(z^{\eta_0})$ for $\nu>\eta_0$.

\end{proof}

So we see that the membership problem is reduced to the word problem in $K$. Hence, since the word problem in $K$ is solvable, we conclude that the membership problem is solvable as well. Moreover, just like in the case of the word problem described above, if the word problem in $H$ belongs to the class $P$ or $NP$, then the membership problem for $H<G$ belongs to the same class.\\
~\\
According to the formula (\ref{eq 8}), we can check the condition in Lemma \ref{lem 5555} in time 
$2|\eta| O( \left\| w \right\|^3 Lg(\left\| w \right\|) + \left\| w \right\|^2 f( \left\| w \right\|Lg(Lg(\left\|w\right\|))))$.
But $|\eta| \leq \left\| w \right\|$, which means that the membership problem for $w$ can be solved in time
\[
O( \left\| w \right\|^4 Lg(\left\| w \right\|) + \left\| w \right\|^3 f( \left\| w \right\|Lg(Lg(\left\|w\right\|)))).
\]

This completes the proof of the part $(ii)$ of Theorem \ref{th 1}.
~\\
~\\
Below we complete the proof of Theorem \ref{th 1} by showing the part (iii) and meanwhile, review the steps of the algorithms for the word and membership problems described above.

First, notice that the word $w$ in (\ref{eq 1}) can be encoded in a machine as a $(2n+1)$-tuple
$(\alpha_0, \beta_1, \ldots, \beta_n,\alpha_n)$. Since for each integer $k$, its binary presentation has length $O(log_2(|k|))$, we obtain that the $(2n+1)$-tuple $(\alpha_0, \beta_1, \ldots, \beta_n,\alpha_n)$ occupies only $O(\sum_{i=0}^n{log_2(|\alpha_i|)})\leq O(\left\| w \right\|)$ machine space or squares of the machine, as it is usual to call for Turing machines (or memory, in other words). At one of the steps in the above described word problem algorithm, we transferred from the form (\ref{eq 1}) to the form (\ref{eq 2}), which can be done using the formula (\ref{eq 0}). Since the formula (\ref{eq 0}) is only about using addition, it can be implemented so that the machine will not occupy more than $O(\left\| w \right\|)$ machine space at each point of the machine work (for example, at the $j$-th step we can keep in machine $\gamma_j$ instead of $(\alpha_0, \ldots, \alpha_{i-1})$ and at the $(j+1)$-st step, add $a_{j+1}$ to $\gamma_j$ and delete $a_{j+1}$ and so on. After we have got the form (\ref{eq 2}) in the machine, according to our word problem algorithm, it is time to check the condition (\ref{eq 99}), which will take only $O(\left\| w \right\|)$ machine space, since it involves only operation of addition of less than $\left\| w \right\|$ numbers. In the case this check shows negative result, our algorithm finishes its work, showing that $\left\|w\right\|$ is not trivial in $G$. Otherwise, our algorithm suggests that the next and final step should be checking the triviality of $(c^{s^{\gamma_1}})^{\beta_1}(c^{s^{\gamma_2}})^{\beta_2} \ldots (c^{s^{\gamma_n}})^{\beta_n}(s^{\mu})$ for all $|\mu| \leq \left\| w \right\|$. Since for different values of $\mu$ we can check this condition consecutively, not simultaneously, the space function for the whole procedure is the same as the space function for a procedure for a fixed $\mu$. We showed that $(c^{s^{\gamma_1}})^{\beta_1}(c^{s^{\gamma_2}})^{\beta_2} \ldots (c^{s^{\gamma_n}})^{\beta_n}(s^{\mu})$ for all $|\mu| \leq \left\| w \right\|$ can be presented in the form (\ref{eq 6}), where $\eta$ and $\eta_j$-s can be calculated by the formula (\ref{eq 99}) and $\zeta_i$-s in (\ref{eq 99}) can be calculated according to (\ref{eq 55}). Since again 
(\ref{eq 99}) involves only operation of addition and $m\leq \left\| w \right\|$, this procedure does not occupy more than $O(\left\| w \right\|)$ machine space. Implementation of the formula (\ref{eq 55}) does not use more than $O(\left\| w \right\|)$ machine space, because it is basically about calculating logarithms of numbers, which are less than $\left\| w \right\|$.
To summarize, up to this point we showed that passing to the form 
\[
z^{\zeta_0}(b^{(i_1)})^{\xi_1}z^{\zeta_1} (b^{(i_2)})^{\xi_2} \ldots  (b^{(i_m)})^{\xi_m}z^{\zeta_m}
\]
from (\ref{eq 6}), requires not more than $O(\left\| w \right\|)$ machine space.
It was shown above, that if $\left\| w \right\|$ is trivial in $G$, then 
\[
z^{\zeta_0}(b^{(i_1)})^{\xi_1}z^{\zeta_1} (b^{(i_2)})^{\xi_2} \ldots  (b^{(i_m)})^{\xi_m}z^{\zeta_m} (z^{\nu})
\]
is also trivial for all $|\nu| \leq \left\| w \right\|$. We can check the triviality of this word for different values of $\nu$ consecutively, so that it will use as much machine space as for checking triviality only for one fixed value of $\nu$.
Taken into account (\ref{eq 888}), checking the triviality for some fixed $\nu$ requires not more than $g(\left\| w \right\| Lg(Lg(\left\| w \right\| ))$ machine space. Thus, we obtained that the space function for the word problem algorithm is not larger than
\[
O(\left\| w \right\|) + O(g(\left\| w \right\| Lg(Lg(\left\| w \right\| )))= O(g(\left\| w \right\| Lg(Lg(\left\| w \right\| ))).
\]

Since triviality of the words in Lemma \ref{lem 5555} can be checked consecutively, the space complexity function of the membership problem algorithm coincides with the space complexity function of the algorithm for the word problem. 
~\\

\section{Proof of the corollaries}

Corollary \ref{corollary 1} follows from the general fact that if groups $A$ and $B$ are solvable of lengths $k$ and $l$ respectively, then $A Wr B$ is solvable as well and the length of solvability is at most $k+l$. This, and other properties of wreath products can be found in the book \cite{neumann-varieties}. \\
~\\\
In general, the estimation $l+2$, in Corollary \ref{corollary 1}, is optimal. For example, it was shown by Ph.Hall \cite{hall 6} that the additive group $\mathbb{Q}$ of rational numbers does not embed into a finitely generated metabelian group i.e., into a finitely generated group of solvability length at most 2.\\
~\\
Corollary \ref{corollary 2} follows from the fact that for torsion free groups $A$ and $B$, since $A Wr B/B^A \cong B$, we get that $A Wr B$ is torsion free as an extension of a torsion free group by a torsion free group. As a direct application of this fact, we have that if $H$ is torsion free then $H Wr Z$ is torsion free as well. Therefore, $ (H Wr Z) Wr S$ is torsion free. Thus, $G$ is torsion free as a non-trivial subgroup of a torsion free group. For the property of locally indicability exactly the same argument works.\\
~\\
Since the supports of the elements $b^{(i)}$ from the proof of Theorem \ref{th 1} are well-ordered subsets of $\mathbb{Z}$ (i.e., bounded from the left side), we have that the passive parts of the elements of the group $K = gp \left\langle z, b^{(i)} | i \in \mathbb{N}\right\rangle$ have well-ordered supports (they are well-ordered subsets of $\mathbb{Z}$). The same way, since $c \in K^S$ has a well-ordered support, we get that the supports of the passive parts of the elements of $G= gp \left\langle c, s \right\rangle$ are well-ordered subsets of $\mathbb{Z}$. Now, it becomes apparent, that since infinite cyclic groups are linearly ordered by their natural order, according to Lemma \ref{lem darbinyan mikaelian}, the group $K$ is linearly orderable. Therefore, again by Lemma \ref{lem darbinyan mikaelian}, the group $G < K Wr S$ is linearly orderable. Thus, Corollary \ref{corollary 3} is proved.
~\\
~\\
\textbf{Acknowledgement.}
I am grateful to Alexander Olshanskii for his encouragement to work on this paper, for his support and many valuable discussions.


\end{document}